\documentclass{amsart}

\usepackage{multicol,fullpage}
\usepackage{amsmath,amsfonts,latexsym,amssymb,enumerate}

\newtheorem{theorem}{Theorem}[section]

\newtheorem{proposition}[theorem]{Proposition}
\newtheorem{corollary}[theorem]{Corollary}

\theoremstyle{definition}

\theoremstyle{remark}

\newcommand{\bbZ}{\mathbb{Z}}
\newcommand{\Bkmq}{B^{({\mathbf{k}})}_m(q)}
\newcommand{\bkmn}{b^{({\mathbf{k}})}_m(n)}
\newcommand{\Ckmq}{C^{({\mathbf{k}})}_m(q)}
\newcommand{\ckmn}{c^{({\mathbf{k}})}_m(n)}
\newcommand{\ckm}{c^{({\mathbf{k}})}_m}
\newcommand{\bfk}{{\mathbf{k}}}

\newcommand{\mm}{\;({\mathrm{mod}} \; m)}

\newcommand{\ld}{\ldots}

\newcommand{\vep}{\varepsilon}

\begin{document}

\title{Characterizing the number of coloured $m-$ary partitions modulo $m$, with and without gaps}

\date{\today}

\author{I. P. Goulden}
\address{Dept. of Combinatorics and Optimization, University of Waterloo, Canada}
\curraddr{}
\email{ipgoulde@uwaterloo.ca, pavel.shuldiner@gmail.com}

\author{Pavel Shuldiner}
\thanks{The work of IPG was supported by an NSERC Discovery Grant.}
 
 \keywords{partition, congruence, generating function}

\subjclass[2010]{Primary  05A17, 11P83; Secondary 05A15}

\begin{abstract}
In a pair of recent papers, Andrews, Fraenkel and Sellers provide a complete characterization for the number of~$m$-ary partitions modulo~$m$, with and without gaps. In this paper we extend these results to the case of coloured~$m$-ary partitions, with and without gaps. Our method of proof is different, giving explicit expansions for the generating functions modulo~$m$.
\end{abstract}
 
\maketitle 

\section{Introduction}

An $m$-ary partition is an integer partition in which each part is a nonnegative integer power of a fixed integer~$m\ge 2$. An $m$-ary partition {\em without gaps} is an $m$-ary partition in which $m^j$ must occur as a part whenever $m^{j+1}$ occurs as a part, for every nonnegative integer $j$.

Recently, Andrews, Fraenkl and Sellers~\cite{afs15} found an explicit expression that characterizes the number of $m$-ary partitions of a nonnegative integer $n$ modulo $m$; remarkably, this expression depended only on the coefficients in the base $m$ representation of $n$. Subsequently Andrews, Fraenkel and Sellers~\cite{afs16} followed this up with a similar result for the number of $m$-ary partitions without gaps, of a nonnegative integer $n$ modulo $m$; again, they were able to obtain a (more complicated) explicit expression, and again this expression depended only on the coefficients in the base $m$ representation of $n$. See also Edgar~\cite{e16} and Ekhad and Zeilberger~\cite{ez15} for more on these results.

The study of congruences for integer partition numbers has a long history, starting with the work of Ramanujan (see, e.g.,~\cite{r19}). For the special case of~$m$-ary partitions, a number of authors have studied congruence properties, including Churchhouse~\cite{c69} for~$m=2$, R\o dseth~\cite{r70} for~$m$ a prime, and Andrews~\cite{a71} for arbitrary positive integers $m\ge 2$. The numbers of~$m$-ary partitions without gaps had been previously considered by Bessenrodt, Olsson and Sellers~\cite{bos13} for~$m=2$.

In this note, we consider~$m$-ary partitions, with and without gaps, in which the parts are {\em coloured}. To specify the number of colours for parts of each size, we let $\bfk = (k_0,k_1,\ld )$ for positive integers $k_0,k_1,\ld $, and say that an~$m$-ary partition is $\bfk$-{\em coloured} when there are $k_j$ colours for the part $m^j$, for $j\ge 0$. This means that there are~$k_j$ different kinds of parts of the same size $m^j$. Let $\bkmn$ denote the number of $\bfk$-coloured~$m$-ary partitions of $n$, and let $\ckmn$ denote the number of $\bfk$-coloured~$m$-ary partitions of~$n$ without gaps. For the latter, some part $m^j$ of any colour must occur as a part whenever some part $m^{j+1}$ of any colour (not necessarily the same colour) occurs as a part, for every nonnegative integer $j$.

 We extend the results of Andrews, Fraenkel and Sellers in~\cite{afs15} and~\cite{afs16} to the case of $\bfk$-coloured~$m$-ary partitions, where~$m$ is relatively prime to $(k_0-1)!$ and to $k_j!$ for $j\ge 1$. Our method of proof is different, giving explicit expansions for the generating functions modulo $m$. These expansions depend on the following simple result.

\begin{proposition}\label{binequiv}
For positive integers $m,a$ with $m$ relatively prime to $(a-1)!$, we have
\[ \left(1-q \right)^{-a} \equiv  \left(1-q^m \right)^{-1} \sum_{\ell = 0}^{m-1} {a-1+\ell \choose a-1} q^{\ell}  \mm .  \]
\end{proposition}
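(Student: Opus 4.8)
The plan is to verify the identity by expanding both sides as formal power series in $q$ and comparing the coefficient of $q^n$ for each $n\ge 0$, working modulo $m$ throughout. On the left-hand side the negative binomial series gives $(1-q)^{-a}=\sum_{n\ge0}\binom{a-1+n}{a-1}q^n$. On the right-hand side I would write $(1-q^m)^{-1}=\sum_{j\ge0}q^{mj}$, multiply out, and observe that if $n=mj+r$ is the unique representation with $0\le r\le m-1$, then the coefficient of $q^n$ equals $\binom{a-1+r}{a-1}$. Thus the proposition reduces to the purely arithmetic claim that
\[ \binom{a-1+n}{a-1}\equiv\binom{a-1+r}{a-1}\mm \qquad\text{whenever } n\equiv r\mm, \]
that is, that $n\mapsto\binom{a-1+n}{a-1}$ is periodic modulo $m$ with period $m$.

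To prove this claim I would start from the integer identity $(a-1)!\binom{a-1+n}{a-1}=\prod_{i=1}^{a-1}(n+i)$, valid for every $n\ge 0$. If $n\equiv r\mm$ then $n+i\equiv r+i\mm$ for every $i$, so the two products are congruent modulo $m$, whence $(a-1)!\binom{a-1+n}{a-1}\equiv(a-1)!\binom{a-1+r}{a-1}\mm$. The hypothesis that $m$ is relatively prime to $(a-1)!$ now lets us cancel the factor $(a-1)!$, which is invertible modulo $m$, to obtain the desired congruence.

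I expect no serious obstacle: the argument is essentially one line of power-series bookkeeping together with one line of arithmetic. The only place requiring any care, and the only place the hypothesis enters, is the cancellation of $(a-1)!$ modulo $m$; dropping the coprimality assumption breaks the periodicity in general. Minor points to keep straight are that the series manipulations are formal (so there is nothing to check about convergence) and that the coefficient extraction on the right uses the uniqueness of the decomposition $n=mj+r$ with $0\le r<m$. The case $a=1$ is degenerate — the empty product is $1$, $(a-1)!=1$, and in fact both sides coincide exactly there since $\sum_{\ell=0}^{m-1}q^\ell=(1-q^m)/(1-q)$ — so no separate treatment is really needed.
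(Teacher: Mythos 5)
Your proof is correct and is essentially the paper's argument: both expand $(1-q)^{-a}$ by the binomial series and reduce the identity to the periodicity $\binom{a-1+n}{a-1}\equiv\binom{a-1+r}{a-1}\mm$ for $n\equiv r\mm$, proved by writing $(a-1)!\binom{a-1+n}{a-1}$ as the product $(n+1)\cdots(n+a-1)$ and cancelling the invertible factor $(a-1)!$. The only difference is presentational (you compare coefficients of $q^n$, the paper regroups the series by residue class), so nothing further is needed.
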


\begin{proof}
From the binomial theorem we have
\[ \left(1-q \right)^{-a} = \sum_{\ell = 0}^{\infty} {a-1+\ell \choose a-1} q^{\ell}. \]
Now using the falling factorial notation $(a-1+\ell)_{a-1} = (a-1+\ell)(a-2+\ell)\cdots (1+\ell)$ we have
\[ {a-1+\ell \choose a-1} = \left( (a-1)!\right)^{-1}(a-1+\ell)_{a-1}. \]
But
\[ (a-1+\ell +m)_{a-1} \equiv  (a-1+\ell)_{a-1} \mm, \]
for any integer $\ell$, and $\left( (a-1)!\right)^{-1} $ exists in $\bbZ_m$ since $m$ is relatively prime to $(a-1)!$, which gives
\begin{equation}\label{bincoeffequiv}
{a-1+\ell + m\choose a-1} \equiv {a-1+\ell \choose a-1} \mm,
\end{equation}
and the result follows.
\end{proof}

\section{Coloured $m$-ary partitions}

In this section we consider the following generating function for the numbers~$\bkmn$ of~$\bfk$-coloured~$m$-ary partitions:
\[ \Bkmq = \sum_{n = 0}^{\infty} \bkmn q^n = \prod_{j=0}^{\infty} \left( 1 - q^{m^j}\right)^{-k_j}.  \]
The following result gives an explicit expansion for~$\Bkmq$ modulo~$m$.

\begin{theorem}\label{Bexpn}
If~$m$ is relatively prime to $(k_0-1)!$ and to $k_j!$ for $j\ge 1$, then we have
\[ \Bkmq \equiv \left( \sum_{\ell_0=0}^{m-1}{k_0-1+\ell_0 \choose k_0-1} q^{\ell_0} \right) \prod_{j=1}^{\infty} \left( \sum_{\ell_j=0}^{m-1}{k_j+\ell_j \choose k_j} q^{\ell_j m^j}\right) \mm.  \]
\end{theorem}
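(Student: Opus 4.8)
The plan is to apply Proposition~\ref{binequiv} once to each factor of the product $\Bkmq=\prod_{j\ge 0}(1-q^{m^j})^{-k_j}$, carrying a single ``leftover'' factor forward at each stage and absorbing it into the next term. Introduce the abbreviations
\[ S_0 := \sum_{\ell_0=0}^{m-1}\binom{k_0-1+\ell_0}{k_0-1}q^{\ell_0}, \qquad S_j := \sum_{\ell_j=0}^{m-1}\binom{k_j+\ell_j}{k_j}q^{\ell_j m^j}\quad(j\ge 1), \]
so that the claimed right-hand side is exactly $\prod_{j\ge 0}S_j$. First I would treat the $j=0$ factor: Proposition~\ref{binequiv} with $a=k_0$ (legitimate since $\gcd(m,(k_0-1)!)=1$) gives $(1-q)^{-k_0}\equiv(1-q^m)^{-1}S_0\mm$, whence, merging the leftover $(1-q^m)^{-1}$ into $(1-q^m)^{-k_1}$,
\[ \Bkmq\equiv S_0\,(1-q^m)^{-(k_1+1)}\prod_{j\ge 2}(1-q^{m^j})^{-k_j}\mm. \]

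The inductive step then handles the factor indexed $j\ge 1$. Assuming, after a number of stages, that
\[ \Bkmq\equiv S_0 S_1\cdots S_{N-1}\,(1-q^{m^N})^{-(k_N+1)}\prod_{j\ge N+1}(1-q^{m^j})^{-k_j}\mm, \]
I would apply Proposition~\ref{binequiv} with $a=k_N+1$ and with $q$ replaced throughout by $q^{m^N}$; this is valid because $\gcd(m,k_N!)=1$ and because a coefficientwise congruence modulo $m$ in $\bbZ[[q]]$ is preserved under the substitution $q\mapsto q^{m^N}$. The conclusion is $(1-q^{m^N})^{-(k_N+1)}\equiv(1-q^{m^{N+1}})^{-1}S_N\mm$, and merging the new leftover $(1-q^{m^{N+1}})^{-1}$ into $(1-q^{m^{N+1}})^{-k_{N+1}}$ advances the displayed relation from $N$ to $N+1$. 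Each such manipulation rests only on the elementary fact that congruence modulo $m$ in $\bbZ[[q]]$ survives multiplication.

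Finally I would pass to the limit. All of this lives in $\bbZ[[q]]$ with its $q$-adic topology, so it suffices to match the coefficient of $q^n$ for each fixed $n$. Choosing $N$ with $m^N>n$, the residual product $(1-q^{m^N})^{-(k_N+1)}\prod_{j\ge N+1}(1-q^{m^j})^{-k_j}$ in the displayed relation, and likewise the tail $\prod_{j\ge N}S_j$ of the claimed right-hand side, are each congruent to $1$ modulo $q^{n+1}$ (already as identities in $\bbZ[[q]]$), since every nonconstant monomial occurring in them has degree at least $m^N>n$. Hence the coefficient of $q^n$ in $\Bkmq$ agrees modulo $m$ with that in $S_0\cdots S_{N-1}$, which equals the coefficient of $q^n$ in $\prod_{j\ge 0}S_j$; as $n$ is arbitrary, the theorem follows.

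The genuinely routine ingredients are the two bookkeeping facts just used (stability of mod-$m$ congruences under multiplication and under $q\mapsto q^{m^N}$). The one place needing care — and the main obstacle — is setting up the infinite telescoping correctly: one must track that precisely one factor $(1-q^{m^{j+1}})^{-1}$ is produced at stage $j$ and shifted into the next term, and then argue that the surviving residual factor vanishes in the $q$-adic limit; the inductive formulation above is arranged to make this clean. A useful sanity check along the way is that the two halves of the hypothesis match exactly what is needed: the $j=0$ factor forces $a=k_0$, hence the condition $\gcd(m,(k_0-1)!)=1$, whereas each factor with $j\ge 1$ forces $a=k_j+1$, hence $\gcd(m,k_j!)=1$.
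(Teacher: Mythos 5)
Your proof is correct and follows essentially the same route as the paper: the paper proves the congruence for the finite partial products $P_i=\prod_{j=0}^{i}(1-q^{m^j})^{-k_j}$ by induction, with exactly your telescoping of the leftover factor $(1-q^{m^{i+1}})^{-1}$ into the next term via Proposition~\ref{binequiv} with $a=k_0$ and then $a=k_j+1$. Your only departure is organizational --- peeling factors off the infinite product rather than building up finite ones --- and your $q$-adic justification of the limit is in fact spelled out more carefully than the paper's one-line appeal to $\Bkmq=\lim_{i\to\infty}P_i$.
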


\begin{proof}
Consider the finite product
\[ P_i= \prod_{j=0}^{i} \left( 1 - q^{m^j}\right)^{-k_j},\qquad i\ge 0.  \]
We prove that
\begin{equation}\label{finnogap}
 P_i \equiv \left( \sum_{\ell_0=0}^{m-1}{k_0-1+\ell_0 \choose k_0-1} q^{\ell_0} \right) \left( 1-q^{m^{i+1}} \right)^{-1} \prod_{j=1}^{i} \left( \sum_{\ell_j=0}^{m-1}{k_j+\ell_j \choose k_j} q^{\ell_j m^j}\right) \mm,
 \end{equation}
by induction on $i$. As a base case, the result for $i=0$ follows immediately from Proposition~\ref{binequiv} with $a=k_0$. Now assume that~(\ref{finnogap}) holds for some choice of $i\ge 0$, and we obtain
\begin{align*}
P_{i+1}&=  \prod_{j=0}^{i+1} \left( 1 - q^{m^j}\right)^{-k_j} = \left(1-q^{m^{i+1}} \right)^{-k_{i+1}} P_i \\
&\equiv  \left( \sum_{\ell_0=0}^{m-1}{k_0-1+\ell_0 \choose k_0-1} q^{\ell_0} \right) \left( 1-q^{m^{i+1}} \right)^{-k_{i+1}-1} \prod_{j=1}^{i} \left( \sum_{\ell_j=0}^{m-1}{k_j+\ell_j \choose k_j} q^{\ell_j m^j}\right)  \mm \\
&\equiv \left( \sum_{\ell_0=0}^{m-1}{k_0-1+\ell_0 \choose k_0-1} q^{\ell_0} \right) \left( 1-q^{m^{i+2}} \right)^{-1} \prod_{j=1}^{i+1} \left( \sum_{\ell_j=0}^{m-1}{k_j+\ell_j \choose k_j} q^{\ell_j m^j}\right) \mm ,
\end{align*} 
where the second last equivalence follows from the induction hypothesis, and the last equivalence follows from Proposition~\ref{binequiv} with $a=k_{i+1}+1$, $q=q^{m^{i+1}}$.

This completes the proof of~(\ref{finnogap}) by induction on $i$, and the result follows immediately since
\[ \Bkmq = \lim_{i \rightarrow \infty} P_i . \]
\end{proof}

Now we give the explicit expression for the coefficients modulo $m$ that follows from the above expansion of the generating function $\Bkmq$.

\begin{corollary}\label{bexpn}
For $n\ge 0$, suppose that the base $m$ representation of $n$ is given by
\[ n=d_0+d_1m +\ld + d_t m^t, \qquad 0 \le t. \]
If~$m$ is relatively prime to $(k_0-1)!$ and to $k_j!$ for $j\ge 1$, then we have
\[  \bkmn \equiv  {k_0-1+d_0 \choose k_0-1} \prod_{j=1}^{t}  {k_j+d_j \choose k_j} \mm.  \]
\end{corollary}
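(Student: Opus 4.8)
The plan is to extract the coefficient of $q^n$ from the product expansion of $\Bkmq$ modulo $m$ provided by Theorem~\ref{Bexpn}. Writing that expansion as
\[
\left( \sum_{\ell_0=0}^{m-1}{k_0-1+\ell_0 \choose k_0-1} q^{\ell_0} \right) \prod_{j=1}^{\infty} \left( \sum_{\ell_j=0}^{m-1}{k_j+\ell_j \choose k_j} q^{\ell_j m^j}\right),
\]
the coefficient of $q^n$ is a sum, over all choices $(\ell_0,\ell_1,\ld)$ with $0\le \ell_j\le m-1$ and $\sum_{j\ge 0}\ell_j m^j = n$, of the product ${k_0-1+\ell_0 \choose k_0-1}\prod_{j\ge 1}{k_j+\ell_j\choose k_j}$. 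The key observation is that such a representation of $n$ is exactly a base-$m$ representation of $n$ with digits $\ell_j\in\{0,1,\ld,m-1\}$, and base-$m$ representations are unique: so there is precisely one admissible tuple, namely $\ell_j=d_j$ for $0\le j\le t$ and $\ell_j=0$ for $j>t$.

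First I would make the uniqueness argument explicit: any solution of $\sum_{j\ge 0}\ell_j m^j = n$ with each $\ell_j$ in the range $0\le\ell_j\le m-1$ must have only finitely many nonzero terms (since the partial sums are bounded by $n$), and then the standard uniqueness of the base-$m$ expansion forces $\ell_j = d_j$. Next I would note that for the indices $j>t$ we have $\ell_j=0$, and the corresponding factor contributes ${k_j+0\choose k_j}=1$, so the infinite product collapses to the finite product over $1\le j\le t$. Substituting $\ell_0=d_0$ and $\ell_j=d_j$ for $1\le j\le t$ then yields
\[
\bkmn \equiv {k_0-1+d_0\choose k_0-1}\prod_{j=1}^{t}{k_j+d_j\choose k_j} \mm,
\]
as claimed.

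One small technical point worth addressing is the legitimacy of reading off coefficients from an infinite product modulo $m$: since each factor is a polynomial with constant term $1$ and positive $q$-degree (the $j$-th factor has all exponents divisible by $m^j$), the product is a well-defined formal power series, and its coefficient of $q^n$ depends on only finitely many factors (those with $j\le t$), so the manipulation is valid and matches the coefficient of $q^n$ in $\Bkmq$ reduced modulo $m$. I do not expect a genuine obstacle here; the only thing requiring care is phrasing the digit-uniqueness step cleanly and making sure the edge case $t=0$ (where the product over $j$ is empty and $\bkmn\equiv{k_0-1+d_0\choose k_0-1}$) is covered by the same argument.
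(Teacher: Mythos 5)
Your proposal is correct and follows essentially the same route as the paper: read off the coefficient of $q^n$ in the expansion of Theorem~\ref{Bexpn}, use the uniqueness of the base-$m$ representation to see that $q^n$ arises from the single specialization $\ell_j=d_j$ for $j\le t$ and $\ell_j=0$ for $j>t$, and note that the factors with $j>t$ contribute ${k_j\choose k_j}=1$. The extra care you take with the uniqueness step and the formal validity of the infinite product is a reasonable elaboration of what the paper leaves implicit.
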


\begin{proof}
In the expansion of the series $\Bkmq$ given in Theorem~\ref{Bexpn}, the monomial $q^n$ arises uniquely with the specializations $\ell_j=d_j$, $j = 0, \ld ,t$ and $\ell_j = 0$, $j\ge t$. But with these specializations, we have ${k_j+\ell_j \choose k_j}={k_j \choose k_j}=1$, and the result follows immediately. 
\end{proof}

Specializing the expression given in~Corollary~\ref{bexpn} to the case $k_j=1$ for $j\ge 0$ provides an alternative proof to Andrews, Fraenkel and Sellers' characterization of $m-$ary partitions modulo $m$, which was given as Theorem~1 of~\cite{afs15}.

\section{Coloured $m$-ary partitions without gaps}

In this section we consider the following generating function for the numbers~$\ckmn$ of~$\bfk$-coloured~$m$-ary partitions without gaps:
\[ \Ckmq = 1 + \sum_{n=0}^{\infty} \ckmn q^n = 1 + \sum_{i=0}^{\infty} \prod_{j=0}^i \left( \left(1-q^{m^j} \right)^{-k_j}   -1 \right)  . \]
The following result gives an explicit expansion for~$\Ckmq$ modulo~$m$.

\begin{theorem}\label{Cexpn}
If~$m$ is relatively prime to $(k_0-1)!$ and to $k_j!$ for $j\ge 1$, then we have
\[ \Ckmq \equiv 1 + \left( \sum_{\ell_0=1}^{m} {k_0-1+\ell_0 \choose k_0-1} q^{\ell_0} \right) \sum_{i=0}^{\infty} \left( 1-q^{m^{i+1}} \right)^{-1} \prod_{j=1}^i \left( \sum_{\ell_j=0}^{m-1}\bigg\{  {k_j+\ell_j \choose k_j }   -1 \bigg\} q^{\ell_j m^j} \right) \mm .   \]
\end{theorem}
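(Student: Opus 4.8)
The plan is to mimic the structure of the proof of Theorem~\ref{Bexpn}, working with the finite truncations
\[ Q_i = 1 + \sum_{s=0}^{i} \prod_{j=0}^{s} \left( \left(1-q^{m^j}\right)^{-k_j} - 1 \right), \qquad i \ge 0, \]
so that $\Ckmq = \lim_{i\to\infty} Q_i$ as a formal power series (the inner products have lowest-degree term of order at least $m^s$, so the limit makes sense coefficientwise). The key reduction is a congruence for the ``type'' factor $\left(1-q^{m^j}\right)^{-k_j} - 1$ modulo $m$. For $j \ge 1$, Proposition~\ref{binequiv} with $a = k_j$ and $q \mapsto q^{m^j}$ gives
\[ \left(1-q^{m^j}\right)^{-k_j} - 1 \equiv \left(1-q^{m^{j+1}}\right)^{-1} \sum_{\ell=0}^{m-1} \binom{k_j - 1 + \ell}{k_j - 1} q^{\ell m^j} - 1 \mm, \]
and I would want to repackage the right side; the cleaner route is to first establish, for each fixed $j\ge 1$, the identity
\[ \left(1-q^{m^j}\right)^{-k_j} - 1 \equiv \left(1-q^{m^{j+1}}\right)^{-1}\left( \sum_{\ell=0}^{m-1}\Big\{\binom{k_j+\ell}{k_j} - 1\Big\} q^{\ell m^j}\right) + \text{(correction)} \mm, \]
by writing $\binom{k_j-1+\ell}{k_j-1} = \binom{k_j+\ell}{k_j} - \binom{k_j-1+\ell}{k_j}$ (Pascal) and summing the telescoping-type contribution of the $\binom{k_j-1+\ell}{k_j}$ terms against $\left(1-q^{m^{j+1}}\right)^{-1}$; one checks this collapses to produce exactly the extra $1$ needed to match $\left(1-q^{m^j}\right)^{-k_j}-1$ against the bracketed sum starting at $\ell=0$. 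For $j=0$, Proposition~\ref{binequiv} with $a=k_0$ gives $\left(1-q\right)^{-k_0} - 1 \equiv \left(1-q^m\right)^{-1}\sum_{\ell_0=1}^{m}\binom{k_0-1+\ell_0}{k_0-1}q^{\ell_0} \mm$ after reindexing (the $\ell_0=0$ term of the sum is killed by the leading $-1$, absorbed into the $\ell_0=m$ term via~(\ref{bincoeffequiv})).

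With these factor-congruences in hand, I would prove by induction on $s$ that
\[ \prod_{j=0}^{s}\left(\left(1-q^{m^j}\right)^{-k_j} - 1\right) \equiv \left( \sum_{\ell_0=1}^{m}\binom{k_0-1+\ell_0}{k_0-1} q^{\ell_0}\right)\left(1-q^{m^{s+1}}\right)^{-1}\prod_{j=1}^{s}\left( \sum_{\ell_j=0}^{m-1}\Big\{\binom{k_j+\ell_j}{k_j} - 1\Big\} q^{\ell_j m^j}\right) \mm, \]
the base case $s=0$ being the $j=0$ congruence above, and the inductive step multiplying by the $j=s+1$ factor and using Proposition~\ref{binequiv} (with $a = k_{s+1}+1$, $q\mapsto q^{m^{s+1}}$) to convert $\left(1-q^{m^{s+1}}\right)^{-1}$ times that factor into $\left(1-q^{m^{s+2}}\right)^{-1}$ times the new bracketed sum — exactly as in the absorption step of Theorem~\ref{Bexpn}. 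Summing this identity over $s$ from $0$ to $i$, the common prefactor $\sum_{\ell_0=1}^m\binom{k_0-1+\ell_0}{k_0-1}q^{\ell_0}$ pulls out, and the sum $\sum_{s=0}^{i}\left(1-q^{m^{s+1}}\right)^{-1}\prod_{j=1}^{s}(\cdots)$ is precisely the truncation of the claimed series; adding the leading $1$ gives $Q_i$, and letting $i\to\infty$ yields the theorem.

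The main obstacle I anticipate is verifying the $j\ge 1$ factor-congruence cleanly — specifically, checking that the Pascal-rule split of $\binom{k_j-1+\ell}{k_j-1}$, when convolved with the geometric-type series $\left(1-q^{m^{j+1}}\right)^{-1}$, really does reassemble into $\left(1-q^{m^j}\right)^{-k_j}-1$ with the bracketed sum $\{\binom{k_j+\ell}{k_j}-1\}$ and no leftover terms. This amounts to the formal-power-series identity $\sum_{\ell\ge 0}\binom{k_j+\ell}{k_j}x^\ell = (1-x)^{-(k_j+1)}$ versus $(1-x)\cdot\sum_{\ell\ge 0}\binom{k_j+\ell}{k_j}x^\ell = \sum_{\ell\ge 0}\binom{k_j-1+\ell}{k_j-1}x^\ell$, combined carefully with the mod-$m$ periodicity~(\ref{bincoeffequiv}) that lets one fold the tail of each geometric series back into its first $m$ coefficients; the bookkeeping of exactly which $-1$'s cancel against which constant terms is where sign or off-by-one errors are most likely, and is the step I would write out in full detail rather than sketch.
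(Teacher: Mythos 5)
Your proposal is correct and is essentially identical to the paper's proof: the paper also truncates to $R_i=\prod_{j=0}^{i}\bigl(\bigl(1-q^{m^j}\bigr)^{-k_j}-1\bigr)$, proves your displayed induction claim verbatim (base case $a=k_0$ with the $\ell_0=0$ term absorbed into $\ell_0=m$ via~(\ref{bincoeffequiv}), inductive step via Proposition~\ref{binequiv} with $a=k_{i+1}+1$ and $a=1$ at $q=q^{m^{i+1}}$), and then sums over $i$. The per-factor ``correction term'' detour in your first paragraph is unnecessary --- the quantity that needs converting is $\bigl(1-q^{m^{s+1}}\bigr)^{-1}$ \emph{times} the new factor, not the factor alone, and that is exactly what your inductive step already handles --- and the bookkeeping you flag as a risk does work out with no leftover terms.
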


\begin{proof}
Consider the finite product
\[ R_i= \prod_{j=0}^{i} \left( \left( 1 - q^{m^j}\right)^{-k_j} - 1 \right), \qquad i\ge 0.  \]
We prove that
\begin{equation}\label{fingap}
 R_i \equiv \left( \sum_{\ell_0=1}^{m}{k_0-1+\ell_0 \choose k_0-1} q^{\ell_0} \right) \left( 1-q^{m^{i+1}} \right)^{-1}\prod_{j=1}^{i} \left( \sum_{\ell_j=0}^{m-1} \bigg\{ {k_j+\ell_j \choose k_j} - 1 \bigg\} q^{\ell_j m^j}\right) \mm,
 \end{equation}
by induction on $i$. As a base case, the result for $i=0$ follows immediately from Proposition~\ref{binequiv} with $a=k_0$. Now assume that~(\ref{fingap}) holds for some choice of $i\ge 0$, and we obtain
\begin{align*}
R_{i+1}  &=  \prod_{j=0}^{i+1} \left( \left( 1 - q^{m^j}\right)^{-k_j} - 1 \right) = \left( \left(1-q^{m^{i+1}} \right)^{-k_{i+1}} - 1 \right) R_i \\
&\equiv  \left( \sum_{\ell_0=1}^{m}{k_0-1+\ell_0 \choose k_0-1} q^{\ell_0} \right) \bigg\{ \left(1-q^{m^{i+1}} \right)^{-k_{i+1}-1}-\left( 1-q^{m^{i+1}} \right)^{-1} \bigg\} \\
& \qquad\qquad\qquad\qquad \qquad\qquad\qquad\qquad \qquad\qquad\qquad\qquad  \times \; \prod_{j=1}^{i} \left( \sum_{\ell_j=0}^{m-1} \bigg\{ {k_j+\ell_j \choose k_j} - 1 \bigg\} q^{\ell_j m^j}\right)  \mm \\
&\equiv \left( \sum_{\ell_0=1}^{m}{k_0-1+\ell_0 \choose k_0-1} q^{\ell_0} \right) \left( 1-q^{m^{i+2}} \right)^{-1} \prod_{j=1}^{i+1} \left( \sum_{\ell_j=0}^{m-1} \bigg\{ {k_j+\ell_j \choose k_j} - 1 \bigg\} q^{\ell_j m^j}\right) \mm ,
\end{align*} 
where the second last equivalence follows from the induction hypothesis, and the last equivalence follows from Proposition~\ref{binequiv} with $a=k_{i+1}+1$, $q=q^{m^{i+1}}$ and $a=1$, $q=q^{m^{i+1}}$.

This completes the proof of~(\ref{fingap}) by induction on $i$, and the result follows immediately since
\[ \Ckmq =  1 + \sum_{i=0}^{\infty} R_i . \]

\end{proof}

\begin{corollary}\label{cexpn}
For $n\ge 1$, suppose that $n$ is divisible by $m$, with  base $m$ representation given by
\[ n = d_s m^s +\ld + d_t m^t, \qquad\quad 1 \le s \le t, \]
where $1 \le d_s \le m-1$, and $0\le d_{s+1},\ld ,d_t \le m-1$. If~$m$ is relatively prime to $(k_0-1)!$ and to $k_j!$ for $j\ge 1$, then for $0\le d_0 \le m-1$ we have 
\[  \ckm(n-d_0) \equiv  {k_0-1- d_0 \choose k_0-1}\left( \vep_s + (-1)^{s-1} \bigg\{  {k_s+d_s - 1 \choose k_s} -1 \bigg\} \sum_{i=s}^t \prod_{j= s+1}^i  \bigg\{ {k_j+d_j \choose k_j} -1 \bigg\} \right) \mm,  \]
where $\vep_s=0$ if $s$ is even, and $\vep_s = 1$ if $s$ is odd.
\end{corollary}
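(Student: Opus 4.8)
The plan is to extract the coefficient of $q^{n-d_0}$ from the expansion of $\Ckmq$ in Theorem~\ref{Cexpn} and match it against the claimed expression. Observe first that, since $n$ is a positive multiple of $m$, we have $n\ge m$ and hence $n-d_0\ge m-(m-1)=1$; so the coefficient of $q^{n-d_0}$ in $\Ckmq$ is precisely $\ckm(n-d_0)$. Write the right-hand side of Theorem~\ref{Cexpn} as $1+\big(\sum_{\ell_0=1}^m{k_0-1+\ell_0\choose k_0-1}q^{\ell_0}\big)F$, where
\[ F:=\sum_{i=0}^{\infty}\left(1-q^{m^{i+1}}\right)^{-1}\prod_{j=1}^i\left(\sum_{\ell_j=0}^{m-1}\bigg\{{k_j+\ell_j\choose k_j}-1\bigg\}q^{\ell_j m^j}\right). \]
Every monomial occurring in $F$ has exponent divisible by $m$, so in the coefficient of $q^{n-d_0}$ only the term $\ell_0=m-d_0$ survives, this being the unique value in $\{1,\ld,m\}$ with $\ell_0\equiv -d_0\mm$ (using $m\mid n$). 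Invoking~(\ref{bincoeffequiv}) with $a=k_0$ rewrites ${k_0-1+m-d_0\choose k_0-1}$ as ${k_0-1-d_0\choose k_0-1}$ modulo $m$, and so it remains to show that the coefficient of $q^{n-m}$ in $F$ is congruent modulo $m$ to the bracketed factor of the statement.

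Next I would read off $[q^{n-m}]F$ by a unique base-$m$ expansion argument. Put $M:=n/m-1\ge 0$, so that $n-m=mM$. In the $i$-th summand of $F$, a monomial $q^{am^{i+1}+\sum_{j=1}^i\ell_j m^j}$ corresponds to writing $M=\ell_1+\ell_2m+\ld+\ell_im^{i-1}+am^i$, which (since $0\le\ell_j\le m-1$ and $a\ge 0$) exhibits $\ell_1,\ld,\ell_i$ as the $i$ lowest base-$m$ digits of $M$; consequently, writing $M=e_0+e_1m+e_2m^2+\cdots$ in base $m$, the coefficient of $q^{mM}$ in the $i$-th summand equals $\prod_{j=1}^i\{{k_j+e_{j-1}\choose k_j}-1\}$ for $i\ge 1$ and equals $1$ for $i=0$. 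Summing over $i$,
\[ [q^{n-m}]F\;=\;1+\sum_{i=1}^{\infty}\prod_{j=1}^i\bigg\{{k_j+e_{j-1}\choose k_j}-1\bigg\}. \]

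Finally I would determine the base-$m$ digits of $M=n/m-1$ from the hypothesis. Subtracting $1$ from $n/m=d_sm^{s-1}+\ld+d_tm^{t-1}$ triggers a borrow cascade through the bottom $s-1$ positions, giving $e_0=\cdots=e_{s-2}=m-1$, $e_{s-1}=d_s-1$, $e_{u-1}=d_u$ for $s+1\le u\le t$, and $e_u=0$ for $u\ge t$ (when $s=1$ the string of $m-1$'s is empty). By~(\ref{bincoeffequiv}) with $a=k_j+1$ we have ${k_j+m-1\choose k_j}\equiv{k_j-1\choose k_j}=0\mm$ for every $j\ge 1$, so each of the first $s-1$ factors above is $\equiv -1$, the factor with $j=s$ is ${k_s+d_s-1\choose k_s}-1$, the factors with $s+1\le j\le t$ are ${k_j+d_j\choose k_j}-1$, and every factor with $j>t$ vanishes. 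Hence the partial products with $1\le i\le s-1$ contribute $\sum_{i=1}^{s-1}(-1)^i$, which together with the leading $1$ gives $\sum_{i=0}^{s-1}(-1)^i=\vep_s$; the partial products with $i\ge s$ contribute $(-1)^{s-1}\{{k_s+d_s-1\choose k_s}-1\}\sum_{i=s}^t\prod_{j=s+1}^i\{{k_j+d_j\choose k_j}-1\}$, the sum over $i$ terminating at $t$ since the factor $j=t+1$ is $0$. Combining with the factor ${k_0-1-d_0\choose k_0-1}$ from the first paragraph yields the stated congruence. The one place that needs care is this final digit bookkeeping — in particular the borrow cascade that appears precisely when $s\ge 2$, and the degenerate case $s=1$ — together with the repeated use of~(\ref{bincoeffequiv}) to collapse binomial coefficients modulo $m$; the alternating-sum cancellation $\sum_{i=0}^{s-1}(-1)^i=\vep_s$ and the truncation of the tail sum at $t$ are then routine.
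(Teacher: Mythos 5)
Your proof is correct and takes essentially the same route as the paper: you extract the coefficient of $q^{n-d_0}$ from the expansion in Theorem~\ref{Cexpn} via the unique base-$m$ decomposition of $n-d_0$ (the same borrow pattern $\ell_0=m-d_0$, a string of $(m-1)$'s, then $d_s-1,d_{s+1},\ld,d_t$) and collapse the binomial coefficients with~(\ref{bincoeffequiv}), yielding $\vep_s$ from the alternating partial products and the truncated tail sum exactly as in the paper. Your write-up is somewhat more explicit about the uniqueness of each monomial's occurrence and the degenerate case $s=1$, but the argument is the same.
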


\begin{proof}
First note that we have
\[ n - d_0 = m - d_0 +(m-1)m^1 + \ld + (m-1)m^{s-1} + (d_s-1)m^s + d_{s+1} m^{s+1} + \ld + d_t m^t. \]
Now consider the following specializations: $\ell_0 = m-d_0$,  $\ell_j=m-1$, $j=1,\ld ,s-1$, $\ell_s = d_s-1$, $\ell_j=d_j$, $j=s+1,\ld ,t$, and $\ell_j=0$, $j > t$. Then, in the expansion of the series $\Ckmq$ given in Theorem~\ref{Cexpn}, the monomial $q^n$ arises once for each $i\ge 0$, in particular with the above specializations truncated to $\ell_0,\ld ,\ell_i$. But with these specializations we have

\vspace{.1in}

\begin{itemize}

\item
for $j=0$:
\[ {k_j - 1+ \ell_j \choose k_j - 1} - 1 = {k_0-1+m-d_0 \choose k_0-1}={k_0-1-d_0 \choose k_0-1}, \qquad \mathrm{from}\;\; ~(\ref{bincoeffequiv}), \]

\item
for $j= 1,\ld ,s-1$:
\[{k_j + \ell_j \choose k_j} - 1 = {k_j - 1 \choose k_j} - 1 = 0 - 1 = -1, \]
and
\[ \sum_{i=0}^{s-1} \prod_{j=1}^i \bigg\{  {k_j+\ell_j \choose k_j }   -1 \bigg\} = \sum_{i=0}^{s-1} (-1)^i = \vep_s, \]

\item 
for $j=s$:
\[ {k_j + \ell_j \choose k_j} - 1 = {k_s + d_s - 1 \choose k_s} - 1, \]

\item
for $j=s+1,\ld ,t$:
\[ {k_j + \ell_j \choose k_j} - 1 = {k_j + d_j \choose k_j} - 1, \]

\item
for $j> t$:
\[ {k_j+\ell_j \choose k_j} - 1 = {k_j \choose k_j} - 1 = 1 - 1 = 0. \]

\end{itemize}

\vspace{.1in}

The result follows straightforwardly from Theorem~\ref{Cexpn}. 
\end{proof}

Specializing the expression given in~Corollary~\ref{cexpn} to the case $k_j=1$ for $j\ge 0$ provides an alternative proof to Andrews, Fraenkel and Sellers' characterization of $m-$ary partitions modulo $m$ without gaps, which was given as Theorem~2.1 of~\cite{afs16}.

\bibliographystyle{amsplain}

\end{document}